\tikzset{negated/.style={
        decoration={markings,
            mark= at position 0.5 with {
                \node[transform shape] (tempnode) {$\backslash$};
            }
        },
        postaction={decorate}
    }
}
\tikzset{boldnegated/.style={
        decoration={markings,
            mark= at position 0.5 with {
                \node[transform shape] (tempnode) {$\textbf{\textbackslash}$};
            }
        },
        postaction={decorate}
    }
}
\setlist[description]{font=\normalfont}
\declaretheorem{theorem}
\declaretheorem[sibling=theorem]{proposition}
\declaretheorem[sibling=theorem]{lemma}
\declaretheorem[sibling=theorem]{claim}
\declaretheorem[style=definition,sibling=theorem]{definition}
\declaretheorem[sibling=theorem]{fact}
\declaretheorem{question}
\DeclareMathOperator{\cf}{cf}
\DeclareMathOperator{\dom}{dom}
\DeclareMathOperator{\range}{range}
\DeclareMathOperator{\stem}{stem}
\DeclareMathOperator{\ssucc}{succ}
\DeclareMathOperator{\osucc}{osucc}
\DeclareMathOperator{\fsplit}{split}
\renewcommand{\th}{^\text{th}}
\newcommand{\su}{\hspace{1mm}\text{s.t.}\hspace{1mm}}
\newcommand{\seq}[2]{\langle #1 : #2 \rangle}
\newcommand{\Col}{\textup{Col}}
\newcommand{\CH}{\textup{\textsf{CH}}}
\newcommand{\ZFC}{\textup{\textsf{ZFC}}}
\newcommand{\LIP}{\textup{\textsf{LIP}}}
\newcommand{\then}{\implies}
\newcommand{\rest}{\upharpoonright}
\newcommand{\nrest}{\!\rest\!}
\renewcommand{\P}{\mathbb P}
\newcommand{\Pcnf}{\mathbb{P}_{\textup{\textsf{CNF}}}}
\newcommand{\Ptanf}{\mathbb{P}_{\textup{\textsf{TANF}}}}
\author{Maxwell Levine}
\title{On Namba Forcing and Minimal Collapses}
\begin{document}

\begin{abstract} We build on a 1990 paper of Bukovsk{\'y} and Copl{\'a}kov{\'a}-Hartov{\'a}. First, we remove the hypothesis of $\CH$ from one of their minimality results. Then, using a measurable cardinal, we show that there is a $|\aleph_2^V|=\aleph_1$-minimal extension that is not a $|\aleph_3^V|=\aleph_1$-extension, answering the first of their questions.\end{abstract}

\maketitle

\section{Introduction}

Forcing extensions often involve collapsing cardinals, so it is natural to study the properties of possible forcing collapses from a general perspective. Namba forcing, which was discovered independently by Bukovsk{\'y} and Namba \cite{Namba1971,Bukovsky1976}, has two notable properties that distinguish it from other forcings that collapse cardinals. First, it allows one to avoid collapsing cardinals below the target: the classical versions singularize $\aleph_2$ while preserving $\aleph_1$. There are limitations, however: Shelah proved that Namba forcing collapses $\aleph_3$ \cite[Theorem 4.73]{Handbook-Eisworth}. The other notable property of Namba forcing is that, like many other tree forcings, it is to some extent minimal. Bukovsk{\'y} and Copl{\'a}kov{\'a}-Hartov{\'a} conducted a thorough investigation into the minimality properties of Namba-like forcings \cite{Bukovsky-Coplakova1990} and their paper became a go-to reference for further work on Namba forcing \cite{Foreman-Todorcevic2005,Guzman-Hrusak-Zapletal2021}.

Bukovsk{\'y} and Copl{\'a}kov{\'a}-Hartov{\'a}'s paper is partially known for a question about collapsing $\aleph_{\omega+1}$ (see \cite{Cummings1997}), but the authors also raised questions about how minimality and control over collapsed cardinals interact. In their notation, a forcing is $|\lambda|=\kappa$ minimal if it forces $|\lambda|=\kappa$ and has no subforcings collapsing $\lambda$ to have cardinality $\kappa$. They showed that $\CH$ implies the $|\aleph_2^V|=\aleph_1$-minimality of classical Namba forcing. We will remove the assumption of $\CH$. They also asked whether there is a $|\aleph_2^V|=\aleph_1$-minimal extension that is not a $|\aleph_3^V|=\aleph_1$-extension \cite[Question 1]{Bukovsky-Coplakova1990}. Assuming the consistency of a measurable cardinal, we will answer their question positively here. Essentially, we are showing that there is more flexibility for producing various $|\lambda|=\kappa$-extensions than was previously known.

The forcing used to obtain these results is a variant of classical Namba forcing in the sense that it is a tree forcing, although in our case the trees will have a height equal to $\omega_1$. An aspect of the main technical idea is present in the recent result that classical Namba forcing consistently has the weak $\omega_1$-approximation property \cite{Levine2023b}. The crux is a sweeping argument that is used to pair the successors of splitting nodes with distinct forced values for a given forcing name. The difference with the present results is that we will use a version of local precipitousness due to Laver to define the splitting behavior of our forcing. This will allow us to use the sweeping argument while ensuring that our forcing is countably closed. We should expect something like this because of the above-mentioned result of Shelah, which in fact shows that \emph{any} extension singularizing $\aleph_2$ to have cofinality $\omega$ while preserving $\aleph_1$ will collapse $\aleph_3$.

The use of large cardinals appears necessary. First, the assumption we employ to define the forcing, which we refer to as the Laver Ideal Property, implies the consistency of a measurable cardinal \cite{Jech-Magidor-Mitchell-Prikry1980}. An extension fitting Bukovsk{\'y} and Copl{\'a}kov{\'a}-Hartov{\'a}'s minimality criteria would likely be a tree forcing, and joint work with Mildenberger \cite{Levine-Mildenberger2024} shows that tree forcings with uncountable height exhibit a number of (rather interesting) pathologies, particularly when it comes to fusion arguments, unless some regularity of their splitting behavior is enforced. Hence, it seems like we need the Laver Ideal Property as long as we expect to use tree forcings. Of course, this does not prove that the large cardinals are necessary. An argument to this effect would probably use an almost disjoint sequence that arises from the failure of a large cardinal principle, and it would need to use the notion of a \emph{strictly} intermediate extension of an arbitrary extension.

\subsection{Definitions and Notation} We assume that the reader is familiar with the basics of set theory  and tree forcings, in particular fusion arguments (see \cite[Chapters 15 and 28]{Jech2003}). Here we will clarify our notation.


\begin{definition} Let $T$ be a tree.
\begin{enumerate}

\item  
For an ordinal $\alpha$, the set
$T(\alpha)$ is the set of $t \in T$ with $\dom(t) =\alpha$.

\item  
The \emph{height} $\operatorname{ht}(T)$ of a tree $T$ is $\min\{\alpha : T(\alpha) = \emptyset\}$.

\item 

We let $[T] = \{f \colon \operatorname{ht}(T) \to \kappa : \forall \alpha < \operatorname{ht}(T), f \rest \alpha \in T\}$. 
Elements of $[T]$ are called \emph{cofinal branches}.
\item 
For
$t_1$, $t_2 \in N \cup [N]$ we write $t_1 \sqsubseteq t_2$ if $t_2 \rest \dom(t_1) = t_1$. The tree order is the relation $\sqsubseteq$. If $t = s \cup \{(\dom(s), \beta)\}$, we write $t = s {}^\frown \langle \beta \rangle$.

\item 
$T \nrest \alpha = \bigcup_{\beta<\alpha}T(\beta)$.

\item 
$T \rest t = \{s \in T : s \sqsubseteq t  \vee t \sqsubseteq s\}$.

\item 
For $t \in T(\alpha)$ we let $\ssucc_T(t) = \{c : c \in T(\alpha+1) \wedge c \sqsupseteq t\}$ denote the \emph{set of immediate successors of $t$}, and $\osucc_T(t) = \{\beta : t {}^\frown \langle \beta \rangle  \in T(\alpha+1)\}$ denote the \emph{ordinal successor set of $t$}.

\item We call $t \in T$ a \emph{splitting node} if $|\ssucc_T(t)|>1$.


\item $\stem(T)$ is the $\sqsubseteq$-minimal splitting node. 

\end{enumerate}
\end{definition}

\begin{definition} Let $\P$ be a tree forcing, loosely defined, with $\mu,\lambda$ fixed as above.

\begin{enumerate}

\item Take $p \in \P$. We let $\fsplit(p)$ denote the set of splitting nodes of $p$. For $\alpha \in \lambda$, $\fsplit_\alpha(p)$ is the set of $\alpha$-order splitting nodes of $p$. 


\item Let $p, q \in \P$, $\alpha < \lambda$. We write $q \leq_\alpha p$ if $q \leq p$, $\fsplit_\alpha(p) = \fsplit_\alpha(q)$, and $\ssucc_p(t)=\ssucc_q(t)$ for all $t \in \fsplit_\alpha(p)$.

\item 
A sequence $\seq{p_\alpha}{\alpha < \delta}$ such that $\delta \leq \lambda$ and for $\alpha <\gamma < \delta$, $p_{\gamma} \leq_\alpha p_\alpha$ is called a \emph{fusion sequence}.
\end{enumerate}
\end{definition}

\section{Results}

\subsection{Minimality without the Continuum Hypothesis} We will discuss the version of Namba forcing that appears in Bukovsk{\'y}'s treatment \cite{Bukovsky1976} since this is the one that appears in Jech's textbook \cite{Jech2003}, which we define here so that there is no risk of ambiguity:

\begin{definition}
The conditions in \emph{classical Namba forcing}, which we denote $\P=\P_{\textup{\textsf{CNF}}}$, consists of conditions that are subsets of ${}^{<\omega} \aleph_2$ such that:

\begin{enumerate}

\item $t \in p$ and $s \sqsubseteq t$ (i.e$.$ s an initial segment of $t$) imply $s \in p$;

\item for all $t \in p$, $|\{\alpha < \aleph_2:t {}^\frown \langle \alpha \rangle \in p\}| \in \{1,\aleph_2\}$;

\item and for all $t \in p$ there is some $s \sqsupseteq t$ such that $s \ne t$ and $|\{\alpha < \aleph_2:t {}^\frown \langle \alpha \rangle \in p\}| = \aleph_2$.

\end{enumerate}

If $p,q \in \P$, then $p \le q$ if and only if $p \subseteq q$.\end{definition}

Bukovsk{\'y} and Copl{\'a}kov{\'a}-Hartov{\'a} showed that $\CH$ implies that $\Pcnf$ is $|\aleph_2^V|=\aleph_1$-minimal \cite[Corollary 1.3]{Bukovsky-Coplakova1990}, and also proved a more general statement, but we will show that the hypothesis of $\CH$ can be dropped if we just want the minimality result for $\Pcnf$. Note that \autoref{no-ch-min} below was specifically proved by Bukovsk{\'y} \cite[Theorems 2,3]{Bukovsky1976} with the assumption of $\CH$. The argument anticipates the main result, \autoref{question1}.

\begin{lemma}\label{no-ch-min} If $G$ is $\Pcnf$-generic over $V$ and suppose $f \in V[G]$ is an unbounded function $\omega \to \theta$ where $\cf^V(\theta) \ge \aleph_2$. Then $V[f]=V[G]$.\end{lemma}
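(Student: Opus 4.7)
The goal is to show that the generic $G$ is computable in $V[f]$, where $f = \dot{f}^G$ for a name $\dot{f}$ forced (by some $p \in G$) to be an unbounded function $\omega \to \theta$ with $\cf^V(\theta) \ge \aleph_2$. The plan is to construct, by fusion, a condition $q \le p$ such that at each splitting node $t$ of $q$ there are $m_t < \omega$ and an injection $\phi_t \colon \osucc_q(t) \to \theta$ in $V$ with $q \rest (t {}^\frown \langle \alpha \rangle) \Vdash \dot{f}(m_t) = \phi_t(\alpha)$ for each $\alpha$. Given such $q \in G$, the branch of $G$ through $q$ is computable in $V[f]$ by recursion on splitting levels: at each splitting node $t$ along the branch, read off $f(m_t)$ and apply $\phi_t^{-1}$ to identify the next node. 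This yields $V[G] \subseteq V[f]$ and hence the desired equality.

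The central technical step is the following sweeping sub-lemma: for any $r \le p$ and any splitting node $t$ of $r$, there exist $m < \omega$ and $r' \le r$ such that $t$ is still a splitting node of $r'$ and each immediate successor of $t$ in $r'$ forces a distinct value of $\dot{f}(m)$. Once established, a standard Namba fusion iterating the sub-lemma at each splitting level produces the desired $q$. I would prove the sub-lemma by iterated pigeonhole: starting from $A_0 := \osucc_r(t)$ of size $\aleph_2$, at stage $n$ extend each $t {}^\frown \langle \alpha \rangle$ (for $\alpha \in A_n$) from its previous refinement $r_\alpha^{(n)}$ to a condition $r_\alpha^{(n+1)} \le r_\alpha^{(n)}$ deciding $\dot{f}(n) = w_\alpha^{(n)}$. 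If $\aleph_2$-many of the forced values are pairwise distinct, halt with $m := n$; otherwise the image of $\alpha \mapsto w_\alpha^{(n)}$ has cardinality $\le \aleph_1$, so pigeonhole yields a value $v_n \in \theta$ shared by an $\aleph_2$-sized $A_{n+1} \subseteq A_n$, and we continue.

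The main obstacle is showing this iteration must halt at a finite stage. Suppose it does not; then one extracts a sequence $g = (v_n)_{n<\omega} \in V$, which is bounded by some $\gamma < \theta$ because $\cf^V(\theta) \ge \aleph_2 > \omega$. A counting argument gives $|A_\omega| = \aleph_2$ for $A_\omega := \bigcap_n A_n$: at each stage one discards at most $\aleph_1 \cdot \aleph_1 = \aleph_1$ successors (the non-selected pigeonhole classes), for at most $\aleph_1$ across $\omega$ stages. From the $\aleph_2$-sized surviving set $A_\omega$ and the descending chains $(r_\alpha^{(n)})_n$, the aim is to assemble a single condition $\bar{r} \le p$ forcing $\dot{f} \rest \omega = g$, contradicting the unboundedness of $\dot{f}$. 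The delicate step---since classical Namba forcing is not countably closed and the chains $(r_\alpha^{(n)})_n$ need not admit naive lower bounds---is to leverage the $\aleph_2$-rigidity on $A_\omega$ to extract $\bar{r}$, in the spirit of the sweeping argument of \cite{Levine2023b}.
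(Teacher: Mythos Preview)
Your fusion framework and the halting case of the sub-lemma are fine, but the non-halting case contains two genuine gaps.

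First, the counting argument for $|A_\omega| = \aleph_2$ is incorrect. When the map $\alpha \mapsto w_\alpha^{(n)}$ has image of size $\le \aleph_1$, pigeonhole guarantees that \emph{some} class has size $\aleph_2$, but nothing prevents several classes from having size $\aleph_2$ simultaneously; selecting one may therefore discard $\aleph_2$-many elements, not $\aleph_1$-many as you assert. A $\subseteq$-decreasing $\omega$-sequence of $\aleph_2$-sized subsets of $\omega_2$ can have empty intersection (under a bijection $\omega_2 \cong \omega_2 \times \omega$, take $A_n$ corresponding to $\omega_2 \times [n,\omega)$), so $|A_\omega| = \aleph_2$ does not follow.

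Second, even granting $|A_\omega| = \aleph_2$, for each $\alpha \in A_\omega$ you have only a descending $\omega$-chain $(r_\alpha^{(n)})_n$ with $r_\alpha^{(n+1)} \Vdash \dot{f}(n) = v_n$; no single condition forces $\dot{f} = g$. Since $\Pcnf$ is not $\sigma$-closed these chains need not have lower bounds, and the sweeping argument you allude to does not provide a mechanism for extracting a condition from $\aleph_2$-many parallel $\omega$-chains. You flag this as ``delicate'' but do not actually close it.

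The paper sidesteps both problems by running the sweep in the $\aleph_2$-direction rather than the $\omega$-direction. One inducts on $\zeta < \aleph_2$, at each step choosing a fresh successor $\alpha_\zeta \in \osucc_{p_n}(t)$, a \emph{single} extension $q_\zeta \le p_n \rest (t {}^\frown \langle \alpha_\zeta \rangle)$, a natural number $n_\zeta$, and an ordinal $\gamma_\zeta \notin \{\gamma_\xi : \xi < \zeta\}$ with $q_\zeta \Vdash \dot{f}(n_\zeta) = \gamma_\zeta$. If this fails at some $\zeta < \aleph_2$, then the union of the \emph{unrefined} conditions $p_n \rest (t {}^\frown \langle \beta \rangle)$ over the remaining $\aleph_2$-many successors already forces $\range(\dot{f})$ (above a fixed finite stage) into the $<\aleph_2$-sized set $\{\gamma_\xi : \xi < \zeta\}$, contradicting unboundedness directly---no $\omega$-chains, no intersections. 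Only \emph{after} the $\aleph_2$-length construction does one apply pigeonhole on $n_\zeta \in \omega$ to obtain a common $m = n_t$. Each successor is thus extended exactly once, and $\sigma$-closure is never needed.
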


\begin{proof} 
Suppose that $\dot{f}$ is a $\Pcnf$-name for an unbounded function $\omega \to \theta$ and this is forced by some $p \in \Pcnf$. We will define a fusion sequence $\seq{p_n}{n<\omega}$ together with an assignment $\{(t,n_t): t \in \bigcup_{n<\omega}\fsplit_n(p_n)\}$ such that:

\begin{enumerate}
\item for all $t \in \bigcup_{n<\omega}\fsplit_n(p_n)$, $n_t \in \omega$,
\item for each $n<\omega$, $t \sqsubset t'$ implies $n_t < n_{t'}$,
\item for each $n<\omega$ and $t \in \fsplit(p_n)$, there is a sequence $\seq{\gamma_\alpha^t}{\alpha \in \osucc_{p_n}(t)}$ such that $p_n \rest t {}^\frown \langle \alpha \rangle \Vdash \textup{``}\dot{f}(n_t) = \gamma_\alpha^t \textup{''}$ and such that $\alpha \ne \beta$ implies $\gamma_\alpha^t \ne \gamma_\beta^t$. 
\end{enumerate} 

If we define such a sequence and $\bar{p}=\bigcap_{n<\omega}p_n$, then $\bar{p} \Vdash \textup{``}V[\Gamma(\Pcnf)] = V[\dot{f}]\textup{''}$ (where $\Gamma(\Pcnf)$ is the canonical name for the generic), i.e. $\bar{p}$ forces that the generic can be recovered from the evaluation of $\dot{f}$ as in standard minimality arguments. More precisely, we use the fact that the generic is defined by the cofinal branch that it adds. The branch can be defined as the downwards closure of a sequence of splitting nodes $\seq{t_n}{n<\omega}$ that is defined by induction: Let $t_0$ be the $\sqsubseteq$-minimal splitting node. If $t_n$ is defined then $t_{n+1}$ is defined as the next splitting node above $t_{n+1} {}^\frown \langle \alpha \rangle$ where $p \rest t {}^\frown \langle \alpha \rangle$ forces the correct value for $\dot{f} \rest n_{t_k}$.

Formally let $p_{n-1} = p$ and suppose we have defined $p_n$. Define $p_{n+1}$ as follows: For each $t \in \fsplit_n(p_n)$, suppose $s \sqsubset t$ is such that $s \in \fsplit_{n-1}(p_n)$ if $n>0$, and if $n=0$ set $n_s = 0$. We will define a subset $s_t=\seq{\alpha^t_\xi}{\xi<\aleph_2} \subseteq \osucc_{p_n}(t)$, a set of extensions $\seq{q_\xi}{\xi<\aleph_2}$, a set of ordinals $\seq{\gamma_\xi}{\xi < \aleph_2}\subseteq \aleph_2$, and a sequence of natural numbers $\seq{n_\xi}{\xi<\aleph_2}$ such that:

\begin{enumerate}[(i)]
\item $q_\xi \le p_n \rest (t {}^\frown \langle \alpha_\xi \rangle)$,
\item $q_\xi \Vdash \textup{``} \dot{f}(n_\xi) = \gamma_\xi\textup{''}$,
\item $\xi \ne \zeta$ implies $\gamma_\xi \ne \gamma_\zeta$.
\end{enumerate}

We define $\alpha_\xi^t$'s, the $q_\xi^t$'s, and the $\gamma_\xi^t$'s by induction together with a sequence of natural numbers $\seq{m_\xi}{\xi<\aleph_2}$. Suppose we have defined them for $\xi<\zeta<\aleph_2$. We claim that there is some $\beta \in \osucc_{p_n}(t) \setminus \seq{\alpha_\xi^t}{\xi<\zeta}$, some $r \le p_n \rest (t {}^\frown \langle \beta \rangle)$, some ordinal $\delta$, and some $m >n_s$ such that $\gamma \notin \seq{\gamma_\xi^t}{\xi<\zeta }$ and such that $r \Vdash \textup{``}\dot{f}(m) = \delta \textup{''}$. Otherwise it is the case that
\[
\bigcup \{p \rest (t {}^\frown \langle \alpha \rangle): \alpha \in \osucc_{p_n}(t) \setminus \sup_{\xi<\zeta}\alpha_\xi \} \Vdash \textup{``}\dot{f} \rest (n_s,\omega) \subseteq  \seq{\gamma_\xi}{\xi<\zeta} \textup{''},
\]
which contradicts the fact that $p$ forces $\dot{f}$ to be unbounded in $\theta$ where $\theta$ has a cofinality strictly greater than $\aleph_1^V$. Hence we can let $\alpha_\zeta := \beta$, $\gamma_\zeta := \delta$, and $n_\zeta := m$.

Now that the $q_\xi$'s, $\gamma_\xi$'s, and $n_\xi$'s have been defined, let $k<\omega$ be such that there is an unbounded $X \subseteq \aleph_2$ such that $n_\xi = k$ for all $\xi \in X$. Then let $n_t = k$ and let $q_t = \bigcup_{\xi \in X}q_\xi^t$. Finally, let $p_{n+1} = \bigcup\{q_t:t \in \fsplit_n(p_n)\}$. Now that we have defined $\seq{p_n}{n<\omega}$, let $\bar{p} = \bigcap_{n<\omega}p_n$. As argued above, $\bar{p} \Vdash \textup{``}V[\Gamma(\Pcnf)] = V[\dot{f}]\textup{''}$.\end{proof}

\begin{theorem} $\ZFC$ proves that $\P_\textup{\textsf{CNF}}$ is $|\aleph_2^V|=\aleph_1$-minimal.\end{theorem}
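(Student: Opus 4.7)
The plan is to reduce the theorem to \autoref{no-ch-min}. Let $G$ be $\Pcnf$-generic over $V$ and let $W$ be an intermediate model with $V \subseteq W \subseteq V[G]$ such that $|\aleph_2^V|^W = \aleph_1$; I need to show that $W = V[G]$.

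First I would observe that since $\aleph_2^V$ has cardinality $\aleph_1$ in $W$, its cofinality there lies in $\{\omega, \omega_1\}$. The key step is to rule out $\cf^W(\aleph_2^V) = \omega_1$. Classical Namba forcing singularizes $\aleph_2^V$ to cofinality $\omega$ in $V[G]$, so a cofinal $\omega$-sequence $\langle \alpha_n : n < \omega \rangle$ in $\aleph_2^V$ exists in $V[G]$. If a cofinal $\omega_1$-sequence $\langle \beta_\xi : \xi < \omega_1 \rangle$ in $\aleph_2^V$ existed in $W$, then (working in $V[G]$) mapping $n$ to the least $\xi$ with $\beta_\xi \geq \alpha_n$ would produce a cofinal $\omega$-sequence in $\omega_1^W$. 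But $\Pcnf$ preserves $\omega_1$, so $\omega_1^V \leq \omega_1^W \leq \omega_1^{V[G]} = \omega_1^V$, and this contradicts the regularity of $\omega_1^{V[G]}$.

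Therefore $\cf^W(\aleph_2^V) = \omega$, and $W$ contains an unbounded $f \colon \omega \to \aleph_2^V$. I would then apply \autoref{no-ch-min} with $\theta = \aleph_2^V$ (which has $V$-cofinality $\aleph_2$) to obtain $V[f] = V[G]$; since $f \in W$ this yields $V[f] \subseteq W \subseteq V[G] = V[f]$, i.e.\ $W = V[G]$. The main obstacle is not this reduction, which is essentially cofinality bookkeeping, but rather \autoref{no-ch-min} itself; the fusion construction there does the real work by arranging that distinct immediate successors of each splitting node force distinct values of $\dot f$, so that the generic branch can be recovered from $f$ alone.
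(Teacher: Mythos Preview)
Your proposal is correct and follows essentially the same argument as the paper: rule out $\cf^W(\aleph_2^V)=\omega_1$ by interleaving a cofinal $\omega_1$-sequence in $W$ with the Namba cofinal $\omega$-sequence to contradict preservation of $\omega_1$, then invoke \autoref{no-ch-min} in the remaining case. Your explicit observation that $\omega_1^W=\omega_1^V$ is a welcome clarification that the paper leaves implicit.
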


\begin{proof} Let $\kappa = \aleph_2^V$ and $\lambda = \aleph_1^V$. Suppose that $G$ is $\Pcnf$-generic over $V$ and that $V \subseteq W \subseteq V[G]$ where $W \models \textup{``}|\kappa| = \aleph_1 \textup{''}$. Consider the case that $\cf^W(\kappa) = \lambda$ as witnessed by some increasing and cofinal $g:\lambda \to \kappa$ in $W$. If $f':\omega \to \kappa$ is the cofinal function added by $\Pcnf$, then in $V[G]$ one can define a cofinal function $h: \omega \to \lambda$ be setting $h(n)$ to be the least $\xi$ such that $f'(n)<g(\xi)$. Then $h$ is cofinal because if $\xi<\lambda$ and $n$ is such that $g(\xi)<f'(n)$, then $h(n)>\xi$. But this implies that $V[G] \models \textup{``}|\lambda|=\omega \textup{''}$, contradicting the fact that $\Pcnf$ preserves $\omega_1$.\footnote{Specifically, $\Pcnf$ and many other variants of Namba forcing in which the trees have height $\omega$ have the property that they preserve stationary subsets of $\omega_1$. A careful and detailed proof for one variant appears in Krueger \cite{Krueger2013}.} Therefore it must be the case that $\cf^W(\kappa) = \omega$ as witnessed by some cofinal $f \in W$, so by \autoref{no-ch-min} we have that $V[f] \subseteq W \subseteq V[G] = V[f]$, hence $W=V[G]$.\end{proof}

\subsection{Developing a Version of Higher Namba Forcing} We will use a notion of Laver to define the forcing.

\begin{definition}[Laver] (See \cite[Chapter X,Definition 4.10]{PIF}.) Given a regular cardinal $\mu$, we write $\LIP(\mu,\lambda)$ if there is a $\mu$-complete ideal $I \subset P(\mu)$ such that there is a set $D \subseteq I^+$ such that:

\begin{enumerate}
\item $D$ is $\lambda$-closed subset in the sense that if $\seq{A_i}{i<\tau}$ is a $\subseteq$-descending sequence of elements of $D$ with $\tau<\lambda$, then $\bigcap_{i<\tau}A_i \in D$,
\item $D$ is dense in $I^+$, i.e$.$ for all $A \in I^+$, there is some $B \subseteq A$ with $B \in I^+$ such that $B \in D$.
\end{enumerate}\end{definition}

\begin{fact}[Laver]\label{gettingLIP} If $\lambda < \mu$ where $\lambda$ is regular and $\mu$ is measurable, then $\Col(\lambda,<\mu)$ forces $\LIP(\mu,\lambda)$.\end{fact}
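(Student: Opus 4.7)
The plan is to work in $V[G]$, where $G$ is $\Col(\lambda,<\mu)$-generic over $V$, and construct $(I,D)$ witnessing $\LIP(\mu,\lambda)$ from a normal measure on $\mu$ in $V$. In $V$, fix a normal ultrafilter $U$ on $\mu$ and the associated elementary embedding $j \colon V \to M$ with $\crit(j)=\mu$. For each $\alpha < \mu$, since $\mu$ is inaccessible, $|\Col(\lambda,<\alpha)| < \mu$, so Levy--Solovay produces a canonical $\mu$-complete ultrafilter $U_\alpha$ in $V[G \rest \alpha]$ extending $U$, and these ultrafilters cohere in that $U_\alpha \subseteq U_\beta$ for $\alpha \leq \beta$. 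I would take
\[
D \defeq \{B \subseteq \mu : \exists \alpha < \mu,\ B \in V[G \rest \alpha] \cap U_\alpha\}.
\]

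The ideal requires more care, because $U$ does not generate a proper $\mu$-complete filter in $V[G]$: any bijection $f \colon \lambda \to \mu$ in $V[G]$ gives a $\lambda$-sequence $B_i = \mu \setminus \{f(i)\} \in U$ with $\bigcap_i B_i = \emptyset$. I would therefore define $I$ through the generic lift of $j$: let $\mathbb{Q} = \Col(\lambda,[\mu,j(\mu)))^{M[G]}$, viewed as a forcing in $V[G]$, and set $A \in I$ iff $\Vdash_\mathbb{Q} \check{\mu} \notin j(\dot{A})$ for any (equivalently, some) name $\dot{A}$ of $A$. This ideal is definable in $V[G]$, is proper because $\Vdash_\mathbb{Q} \check{\mu} \in j(\check{\mu})$, and is $\mu$-complete because for $\tau < \mu = \crit(j)$ one has $j^*(\bigcup_{i<\tau} A_i) = \bigcup_{i<\tau} j^*(A_i)$ in any lift $j^*$.

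The verifications that $D \subseteq I^+$ and that $D$ is $\lambda$-closed are then routine. If $B \in V[G \rest \alpha] \cap U_\alpha$, some $Y \in U$ satisfies $Y \subseteq B$, and by elementarity $\mu \in j(Y) \subseteq j^*(B)$ in any lift, so $B \in I^+$. For a $\subseteq$-descending $\tau$-sequence $\seq{B_i}{i<\tau}$ in $D$ with $\tau < \lambda$, the supremum $\alpha \defeq \sup_i \alpha_i$ is below $\mu$ by regularity; the tail $\Col(\lambda,[\alpha,\mu))$ is $\lambda$-closed over $V[G \rest \alpha]$, so the sequence itself lies in $V[G \rest \alpha]$, and $\bigcap_i B_i \in U_\alpha$ follows from the $\mu$-completeness of $U_\alpha$ in $V[G \rest \alpha]$.

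The hard part will be the density of $D$ in $I^+$: given $A \in I^+$, witnessed by $h \in \mathbb{Q}$ with $h \Vdash \check{\mu} \in j(\dot{A})$, I need to exhibit $\alpha < \mu$ and $B \in V[G \rest \alpha] \cap U_\alpha$ with $B \subseteq A$. The strategy exploits the $\mu$-cc of $\Col(\lambda,<\mu)$: fixing a nice name $\dot{A}$, the antichain deciding $\gamma \in \dot{A}$ has support bounded by some $\alpha_\gamma < \mu$ for each $\gamma$. Normality of $U$ then yields a club $C \in U$ of closure points of $\gamma \mapsto \alpha_\gamma$, which ensures $A \cap \gamma \in V[G \rest \gamma]$ for $\gamma \in C$. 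Combining this localization with the master-condition-style witness provided by $h$ produces the desired $\alpha$ and $B$. Carefully balancing the $\mu$-cc of the initial segment against the $\lambda$-closure of the tail is the technical crux of Laver's argument, whose details appear in \cite{PIF}.
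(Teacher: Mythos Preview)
The paper does not prove this statement at all: it is recorded as a \emph{Fact} attributed to Laver, followed only by the remark that ``Laver's proof \ldots\ is unpublished, but the argument is similar to the one found by Galvin, Jech, and Magidor for obtaining a certain precipitous ideal on $\aleph_2$,'' together with a pointer to Shelah's \emph{Proper and Improper Forcing}, Chapter~X. Your sketch therefore already goes well beyond what the paper supplies, and it is entirely consistent with those references: defining the ideal $I$ via the generic lift of the measure embedding through the tail forcing $\Col(\lambda,[\mu,j(\mu)))^{M[G]}$, and taking $D$ to be the union of the Levy--Solovay extensions $U_\alpha$ living in the initial segments $V[G\rest\alpha]$, is precisely the Galvin--Jech--Magidor framework adapted to isolate Laver's dense $\lambda$-closed family.

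Your verification of $\lambda$-closure is correct as written (the point that the $\tau$-sequence drops to $V[G\rest\alpha]$ by $<\!\lambda$-closure of the tail is the key step), and your identification of density as the crux is accurate. The one place where your outline is thin is the passage from the witness $h \in \mathbb Q$ to a concrete $B \in U_\alpha$: the club of closure points gives $A\cap\gamma \in V[G\rest\gamma]$, but one still needs to extract a single $\alpha$ and a $B \subseteq A$ in $V[G\rest\alpha]$. The standard route represents $h$ as $[g]_U$ with $g(\gamma) \in \Col(\lambda,[\gamma,\mu))$ and reflects the forcing statement via \L o\'s; since you explicitly defer this step to \cite{PIF}, which is exactly what the paper itself does, there is no discrepancy to flag.
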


Laver's proof of \autoref{gettingLIP} is unpublished, but the argument is similar to the one found by Galvin, Jech, and Magidor for obtaining a certain precipitous ideal on $\aleph_2$ \cite{Galvin-Jech-Magidor1978}. Some additional details appear in Shelah \cite[Chapter X]{PIF}.

Now we will define a ``tall'' augmented version of Namba forcing.



\begin{definition} Assume that $\kappa \le \mu < \lambda$ are regular cardinals. Assume $\LIP(\mu,\lambda)$ holds and that $D$ is the dense set witnessing this. Let $\Ptanf^\kappa(D)$ be subsets $p \subseteq {}^{<\kappa}\mu$ such that:

\begin{enumerate}

\item $p$ is a tree, i.e$.$ if $t \in p$ and $s \sqsubseteq t$ then $s \in p$,

\item if $t$ is a splitting node then $\osucc_p(t) \in D$,


\item for all $t \in p$ and $\gamma<\kappa$, there is some $s \sqsupset t$ such that $\dom s \supseteq \gamma$ and $s$ is a splitting node,

\item for all $\sqsubseteq$-increasing sequences of splitting nodes $\seq{t_i}{i<j} \subset p$ with $j<\kappa$, $t^*:=\bigcup_{i<j}t_i \in p$ and $t^*$ is a splitting node.

\end{enumerate} 

For $p,q \in \Ptanf^\kappa(D)$, let $p \le_{\Ptanf^\kappa(D)} q$ if and only if $p \subseteq q$.

In other words, the conditions in $\Ptanf^\kappa(D)$ are Miller-style perfect trees of height $\lambda$ and with club-wise vertical splitting and horizontal splitting sets in $D$.
\end{definition}

Variants of this definition are found throughout the literature starting with work of Kanamori \cite{Kanamori1980}. We will develop the forcing in this section. Many of its properties generalize those of classical Namba forcing, but \autoref{main-minimality} is a delicate point. For the remainder of this section, let $D$ witness $\LIP(\mu,\lambda)$ with respect to an ideal $I$ and let $\P= \Ptanf^\kappa(D)$.


\begin{proposition} $\P$ is $\kappa$-closed.\end{proposition}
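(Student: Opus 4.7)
The plan is to show that the intersection $p^* := \bigcap_{\alpha < \tau} p_\alpha$ of any descending sequence $\langle p_\alpha : \alpha < \tau \rangle$ in $\P$ with $\tau < \kappa$ belongs to $\P$ and hence serves as a lower bound. The successor case is immediate (with $p^* = p_{\tau_0}$ when $\tau = \tau_0 + 1$), and by induction on $\tau$ the singular case reduces via a cofinal subsequence of partial intersections, so the heart of the matter is when $\tau$ is a regular limit cardinal below $\kappa$.

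Verifying the tree axiom and clauses (2) and (4) of the definition of $\Ptanf^\kappa(D)$ for $p^*$ is relatively mechanical, and in each case relies on the $\lambda$-closure of $D$. For clause (2), if $t$ splits in $p^*$ then the containment $\osucc_{p^*}(t) \subseteq \osucc_{p_\alpha}(t)$ forces $t$ to split in every $p_\alpha$, so each $\osucc_{p_\alpha}(t) \in D$; since $\tau < \kappa \le \mu < \lambda$ and the $\osucc_{p_\alpha}(t)$ form a $\subseteq$-decreasing $\tau$-sequence in $D$, the $\lambda$-closure of $D$ delivers $\osucc_{p^*}(t) = \bigcap_\alpha \osucc_{p_\alpha}(t) \in D$. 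Clause (4) is analogous: a $\sqsubseteq$-increasing sequence $\langle t_i : i < j \rangle$ of splitting nodes of $p^*$ with $j < \kappa$ is, in each $p_\alpha$, an increasing sequence of splitting nodes, so by clause (4) of $p_\alpha$ its union $t^*$ lies in $p_\alpha$ as a splitting node, and the same $\lambda$-closure argument places $t^* \in p^*$ as a splitting node.

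The main obstacle is clause (3), dense splitting above every node of $p^*$. The key preliminary observation is that $\osucc_{p^*}(s) \neq \emptyset$ for every $s \in p^*$: each $\osucc_{p_\alpha}(s)$ is non-empty by clause (3) applied to $p_\alpha$, and either some $\osucc_{p_{\alpha_0}}(s)$ is a singleton $\{\eta\}$ (in which case monotonicity of the decreasing sequence forces $\eta \in \osucc_{p_\alpha}(s)$ for every $\alpha$) or all $\osucc_{p_\alpha}(s) \in D$ and the $\lambda$-closure of $D$ supplies a non-empty intersection. Given $t \in p^*$ and $\gamma < \kappa$, I would construct a splitting node of $p^*$ above $t$ with domain at least $\gamma$ by a transfinite recursion of length $\delta < \kappa$ with $\delta \ge \gamma$, guided by a function $\iota \colon \delta \to \tau$ whose fibers are sufficiently cofinal in $\delta$. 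We build a $\sqsubseteq$-increasing chain $\langle t_\xi : \xi \le \delta \rangle$ in $p^*$ with $t_0 = t$; at successor stages, $t_{\xi+1}$ is chosen to be a splitting node of $p_{\iota(\xi)}$ above $t_\xi$ with strictly growing domain; at limit stages, $t_\xi$ is the union of the earlier $t_{\xi'}$. Applying clause (4) of each $p_\alpha$ to the cofinal subsequence of those $t_{\xi'+1}$ that split in $p_\alpha$ places $t_\xi$ in $p_\alpha$ at limits, so $t_\xi \in p^*$ throughout; the same argument at $\xi = \delta$ shows that $t_\delta$ is a splitting node of every $p_\alpha$, hence of $p^*$, with $\dom t_\delta \ge \gamma$.

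The delicate point I expect to be the main obstacle is the successor step: given $t_\xi \in p^*$, one must find a splitting node of $p_{\iota(\xi)}$ above $t_\xi$ lying in every $p_\alpha$, not just in $p_{\iota(\xi)}$. This reduces to a sub-claim about the tail intersection $\bigcap_{\alpha > \iota(\xi)} p_\alpha$ and can be handled, in the regular-$\tau$ case, by an auxiliary recursion combining clause (3) of the individual $p_\alpha$'s with the $\lambda$-closure of $D$ to synthesize a common splitting node from the splittings of the $p_\alpha$'s for $\alpha > \iota(\xi)$. One must also take care with limit stages $\xi < \delta$ whose cofinality is less than $\tau$, for which the function $\iota$ cannot meet all cofinality requirements simultaneously; this is accommodated by choosing $\delta$ to have cofinality at least $\tau$ and handling low-cofinality limits separately via the same tail-intersection argument.
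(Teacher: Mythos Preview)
Your treatment of clauses (1), (2), and (4) for $p^*$ is correct and essentially matches the paper. The problem lies in your handling of clause (3), where the two obstacles you flag (the successor step and low-cofinality limits) are genuine in your setup, and your proposed resolutions do not close them.

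Concretely: your ``auxiliary recursion'' for the successor step is meant to produce, above a given $t_\xi \in p^*$, a node splitting in $p_{\iota(\xi)}$ and lying in every $p_\alpha$, by running through $\alpha > \iota(\xi)$ and at each stage invoking clause (3) of the individual $p_\alpha$. But at a limit stage $\alpha$ of this inner recursion you must continue from the union of the earlier $s_{\alpha'}$'s, and for this you need that union to lie in $p_\alpha$. Each $s_{\alpha'}$ with $\alpha' < \alpha$ is only known to split in $p_{\alpha'} \supseteq p_\alpha$, which does not place it---let alone its union---inside $p_\alpha$; clause (4) of $p_\alpha$ is inapplicable because the earlier $s_{\alpha'}$'s need not be splitting nodes of $p_\alpha$. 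The same defect recurs in your separate treatment of low-cofinality limits. And since the tail $\bigcap_{\alpha > \iota(\xi)} p_\alpha$ equals $p^*$ when $\tau$ is a regular limit, appealing to properties of that tail is exactly what you are trying to prove.

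The paper's argument avoids all of this with one idea: use the induction on $\tau$ not merely to reduce to the regular case but \emph{inside} it, by working with \emph{initial} intersections. Assuming $\bigcap_{j<i} p_j \in \P$ for every $i<\tau$, build a single $\sqsubseteq$-increasing sequence $\langle t_i : i<\tau\rangle$ where $t_i$ is a splitting node of $\bigcap_{j<i} p_j$ chosen (by clause (3) of that condition) above $\bigcup_{j<i} t_j$ and above height $\gamma$. There is no requirement that $t_i \in p^*$. At the end set $t^* = \bigcup_{i<\tau} t_i$: for each fixed $\alpha$ the tail $\langle t_i : \alpha < i < \tau\rangle$ consists of splitting nodes of $p_\alpha$, so clause (4) makes $t^*$ a splitting node of $p_\alpha$, and the $\lambda$-closure of $D$ then gives $\osucc_{p^*}(t^*) = \bigcap_\alpha \osucc_{p_\alpha}(t^*) \in D$. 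This single diagonal of length $\tau$ replaces your outer recursion of length $\delta$, your auxiliary recursion, and your low-cofinality bookkeeping simultaneously.
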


\begin{proof} Let $\tau<\kappa$ and suppose $\seq{p_i}{i<\tau}$ is a descending sequence of conditions in $\P$. It is enough to argue that $p:= \bigcap_{i<\tau}p_i \in \P$. We will do so by induction on the size of the intersection, so let as assume that $\bigcap_{i<j}p_i \in \P$ for all $j<\tau$. Since $\emptyset \in p_i$ for all $i<\tau$, it is enough to show that for any $\gamma<\kappa$, there is some $t \in p$ with $\dom t \supseteq \gamma$, $|\ssucc_p(t)|=\mu$, and $\osucc_p(t) \in I^+$. Using our inductive assumption, we can find a sequence $\seq{t_i}{i<\tau}$ such that $t_i \in \bigcap_{j<i}p_j$ is a splitting node for all $i<\tau$. Then let $t^* = \bigcup_{i<\tau}t_i$. For all $i<\tau$, $t^*$ is a union splitting nodes of $p_i$, so it is a splitting node of $p_i$. Let $O_i=\osucc_{p_i}(t^*)$. Then $\bigcap_{i<\lambda}O_i \in I^+$ by the closure property of $\LIP(\mu,\lambda)$, so we are done. (See \cite{Kanamori1980}.) \end{proof}


\begin{proposition}\label{main-singularization} $\Vdash_\P \textup{``}\cf(\mu)=\kappa \textup{''}$.\end{proposition}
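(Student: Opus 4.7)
The plan is to establish both $\cf^{V[G]}(\mu) \le \kappa$ and $\cf^{V[G]}(\mu) \ge \kappa$ separately, from which the desired equality follows.

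For the upper bound, I would show that the generic cofinal branch $\dot{b} \colon \kappa \to \mu$ (the unique element of $[\bigcap G]$) is forced to have range unbounded in $\mu$. The key observation is that, since $D \subseteq I^+$ and $I$ is a $\mu$-complete ideal on $\mu$ containing singletons, every element of $I^+$ has size $\mu$ and in particular is unbounded in $\mu$. Given any $p \in \P$ and any $\alpha < \mu$, I would use clause (3) of the definition of $\Ptanf^\kappa(D)$ to find a splitting node $t \in p$, pick some $\beta \in \osucc_p(t)$ with $\beta > \alpha$, and note that $p \rest (t {}^\frown \langle \beta \rangle) \le p$ forces $\dot{b}(\dom t) = \beta > \alpha$. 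By density, the range of $\dot{b}$ is forced to be cofinal in $\mu$, so $\cf^{V[G]}(\mu) \le \kappa$.

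For the lower bound, I would invoke the $\kappa$-closure of $\P$ established in the previous proposition: since $\P$ adds no new sequences of length $<\kappa$, any cofinal function $\gamma \to \mu$ with $\gamma < \kappa$ in $V[G]$ would already belong to $V$, contradicting the regularity of $\mu$ in $V$. Combining the two inequalities yields $\Vdash_\P \textup{``}\cf(\mu) = \kappa\textup{''}$.

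The main point to verify --- rather than a true obstacle --- is the unboundedness of $I^+$-sets in $\mu$, which is what makes the horizontal splitting prescribed by $D$ actually produce cofinally large values for the generic branch. Everything else is a routine density computation coupled with the already-proven $\kappa$-closure.
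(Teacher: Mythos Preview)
Your argument is correct and follows the same route as the paper's (very terse) proof, which merely says that by density the generic is a $\kappa$-sequence in $\mu$ and names it $\dot f$; you spell out both the unboundedness of the generic branch and the lower bound via $\kappa$-closure, neither of which the paper makes explicit. The only point to flag is that the paper's definition of $\LIP(\mu,\lambda)$ does not literally require $I$ to contain singletons, so your claim that $I^+$-sets are unbounded in $\mu$ rests on a tacit (and entirely standard) nontriviality assumption on $I$---one the paper clearly intends and which holds in the model produced by \autoref{gettingLIP}.
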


\begin{proof} By a density argument, the intersection of all conditions in a $\P$-generic filter is a $\kappa$-sequence consisting of ordinals in $\mu$. Let $\dot{f}$ be the name for the function sending the $i\th$ point of the sequence to its corresponding ordinal.\end{proof}

This is our main lemma. The crux is the sweeping argument in \autoref{basic-claim}.

\begin{lemma}\label{main-minimality} $\P$ is $(\cf(\mu)=\kappa)$-minimal.\end{lemma}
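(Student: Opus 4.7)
The plan is to deduce \autoref{main-minimality} from a higher analogue of \autoref{no-ch-min}: if $\dot f$ is a $\P$-name forced by some $p$ to be an unbounded function $\kappa \to \theta$ with $\cf^V(\theta) \ge \mu$, then there is $\bar p \le p$ with $\bar p \Vdash V[\Gamma(\P)] = V[\dot f]$. Granting the analogue, suppose $V \subseteq W \subseteq V[G]$ with $W \models \cf(\mu) = \kappa$; then $W$ contains a cofinal $f \colon \kappa \to \mu$, and applying the analogue with $\theta = \mu$ (so $\cf^V(\theta) = \mu \ge \mu$) gives $V[f] = V[G]$, hence $W = V[G]$. The analogue therefore delivers the minimality.

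For the analogue I would mirror \autoref{no-ch-min} but build a fusion sequence $\langle p_\alpha : \alpha < \kappa \rangle$ of length $\kappa$, using $\kappa$-closedness at limits, together with an assignment $t \mapsto n_t < \kappa$ on splitting nodes and distinct ordinals $\gamma^t_\beta$ for $\beta \in \osucc(t)$ so that $p_{\alpha+1}\rest(t\,{}^\frown\langle\beta\rangle) \Vdash \dot f(n_t) = \gamma^t_\beta$ and $n_t$ strictly increases along $\sqsubseteq$-chains. At a splitting node $t \in \fsplit_\alpha(p_\alpha)$, set $n^*_t := \sup\{n_s+1 : s \sqsubsetneq t, s \text{ splitting}\} < \kappa$ by regularity of $\kappa$. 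Enumerate $A := \osucc_{p_\alpha}(t) \in D$ as $\langle \beta_\xi : \xi<\mu\rangle$ and pick inductively $(q_\xi, n_\xi, \gamma_\xi)$ with $q_\xi \le p_\alpha\rest(t\,{}^\frown\langle\beta_\xi\rangle)$, $n_\xi \ge n^*_t$, $\gamma_\xi \notin \{\gamma_{\xi'} : \xi'<\xi\}$, and $q_\xi \Vdash \dot f(n_\xi) = \gamma_\xi$. The sweeping step goes through verbatim: otherwise $p_\alpha\rest(t\,{}^\frown\langle\beta_\xi\rangle)$ would force $\dot f\rest[n^*_t,\kappa) \subseteq \{\gamma_{\xi'} : \xi'<\xi\}$, a subset of $\theta$ of size $<\mu \le \cf^V(\theta)$, contradicting unboundedness of $\dot f$.

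The new ingredient beyond \autoref{no-ch-min} is producing $\osucc_{p_{\alpha+1}}(t) \in D$ while collapsing the $n_\xi$ to a single value, which is where the Laver Ideal Property enters decisively. I would partition $A = \bigsqcup_{n<\kappa} A^n$ according to $n_\xi = n$; since $\kappa < \mu$ and $I$ is $\mu$-complete, at least one piece $A^{n^\dagger}$ lies in $I^+$, and by density of $D$ in $I^+$ I can choose $B \subseteq A^{n^\dagger}$ with $B \in D$. Setting $n_t := n^\dagger$, $\osucc_{p_{\alpha+1}}(t) := B$, and $\gamma^t_\beta := \gamma_\xi$ for $\beta = \beta_\xi \in B$ yields a refinement satisfying clause (2) of $\Ptanf^\kappa(D)$ at $t$ and preserves distinctness of the $\gamma$-values across $\osucc_{p_{\alpha+1}}(t)$.

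The main obstacle is the $\kappa$-length fusion limit itself: $\bar p := \bigcap_{\alpha<\kappa} p_\alpha$ must satisfy clauses (3) and (4) of $\Ptanf^\kappa(D)$, and as emphasized in the introduction this is precisely where uncountable-height tree forcings can misbehave. I would handle this by interleaving the fusion with height growth---arranging that order-$\alpha$ splitting nodes sit at heights $\ge$ some fixed increasing cofinal function in $\kappa$---so that vertical cofinality is preserved at the limit, while invoking $\kappa$-closedness of $\P$ at successor and limit stages to stay inside $\P$. Once $\bar p \in \P$ is secured, recovery of the generic branch from $\dot f$ in $V[\dot f]$ proceeds by transfinite induction: take $t_0 = \stem(\bar p)$; given $t_\alpha$ with its assigned $n_{t_\alpha}$, the value $f(n_{t_\alpha})$ singles out a unique $\beta \in \osucc_{\bar p}(t_\alpha)$ via distinctness of the $\gamma^{t_\alpha}_\beta$'s, which determines $t_{\alpha+1}$; at limits $\alpha < \kappa$, clause (4) of $\Ptanf^\kappa(D)$ guarantees that $t_\alpha := \bigcup_{\beta<\alpha} t_\beta$ is itself a splitting node. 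Hence $\bar p \Vdash V[\Gamma(\P)] = V[\dot f]$, finishing the analogue and hence the lemma.
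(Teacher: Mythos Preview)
Your proposal is correct and follows essentially the same approach as the paper: the sweeping argument to produce distinct forced values across successors of a splitting node, followed by the use of $\mu$-completeness of $I$ to collapse the indices to a single $n_t$ and density of $D$ to land the thinned successor set back in $D$, then a $\kappa$-length fusion and branch-recovery from $f$. The only cosmetic difference is that the paper records distinct initial segments $a_\alpha = \dot f \rest i$ rather than distinct single values $\gamma^t_\beta = \dot f(n_t)$, and the paper is somewhat terser about the fusion limit, which you handle more explicitly.
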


\begin{proof} Suppose that $\dot{f}$ is a $\P$-name forced by the empty condition to be a cofinal function $\kappa \to \mu$.

We define the main idea of the proof presently. Let $\varphi(q,i)$ denote the formula
\begin{align*}
 i < \kappa \ \wedge \ & q \in \P \wedge \exists \seq{a_\alpha}{\alpha \in \osucc_q(\stem(q))} \su \\
&\forall \alpha \in \osucc_q(\stem(q)),q \rest (\stem(q) {}^\frown \langle \alpha \rangle) \Vdash `\dot{f} \rest i = a_\alpha\textup{'} \wedge \\
& \forall \alpha, \beta \in \osucc_q(\stem(q)), \alpha \ne \beta \then a_\alpha \ne a_\beta.
\end{align*}

\begin{claim}\label{basic-claim} $\forall j<\kappa, p  \in \P,\exists i \in (j,\kappa),q \le p \su \stem(p)=\stem(q) \wedge \varphi(q,i)$.\end{claim}

\begin{proof}
First we establish a slightly weaker claim: for all splitting nodes $t \in p$, there is a sequence $\seq{(q_\alpha,i_\alpha,a_\alpha)}{\alpha \in \osucc_p(t)}$ such that:

\begin{enumerate}[(i)]
\item $\forall \alpha \in \osucc_p(t)$, $\forall i_\alpha \in (j,\kappa)$,
 $q_\alpha \le p \rest (t {}^\frown \langle \alpha \rangle)$, and
 $q \rest (t {}^\frown \langle \alpha \rangle) \Vdash ``\dot{f} \rest i_\alpha = a_\alpha\textup{''}$,
\item $\alpha \ne \beta \then a_\alpha \ne a_\beta$.
\end{enumerate}

We define this sequence by induction on $\alpha \in \osucc_p(t)$. Suppose we have $\seq{(q_\beta,i_\beta,a_\beta)}{\beta \in \alpha \cap \osucc_p(t)}$ such that (i) and (ii) hold below $\alpha$. Then we can argue that there is a triple $(r,i,a)$ such that $r \le p \rest (t {}^\frown \langle \alpha \rangle)$ and $r \Vdash \textup{``}\dot{f} \rest i = a \text{''}$ and $a \notin \{a_\beta:\beta \in \alpha \cap \osucc_p(t)\}$. If not, this means that
\[
p \rest (t {}^\frown \langle \alpha \rangle)  \Vdash \textup{``}\bigcup_{i<\lambda}\dot{f} \rest i \subseteq \{a_\beta:\beta \in \alpha \cap \osucc_p(t)\} \textup{''}.
\]
(Note that by $\lambda$-closure, $\P$ forces $\textup{``}\dot{f} \rest i \in V\textup{''}$ for all $i<\kappa$.) This is a contradiction because $\bigcup_{\beta \in \alpha \cap \osucc_p(t)}a_\beta$ has cardinality less than $\mu$ and $\dot{f}$ is forced to be unbounded in $\mu$. Since the triple that we want exists, we can let $(q_\alpha,i_\alpha,a_\alpha)$ be such a triple.

Now that we have established the slightly weaker claim, apply the $\mu$-completeness of $I$ to find some $S' \subseteq \osucc_p(t)$ such that $S' \in I^+$ and there is some $i$ such that $i_\alpha = i$ for all $\alpha \in S'$. Then choose $S \subseteq S'$ such that $S \in D$ using the density property indicated by $\LIP(\mu,\lambda)$ and let $q = \bigcup_{\alpha \in S}q_\alpha$.\end{proof}

Now that we have our claim, we can use it to construct a fusion sequence $\seq{p_\xi}{\xi<\lambda}$ such that:

\begin{enumerate}[(i)]
\item $\forall \xi<\kappa,\forall \zeta<\xi,t \in \fsplit_\zeta(p_\xi),\exists k<\kappa,\varphi(p_\zeta \rest t,k)$;
\item $\forall \xi<\kappa,t \in \fsplit_\xi(p_\xi),\forall \zeta<\xi$, if $k_\zeta$ is such that $\varphi(p_\zeta \rest ,k_\zeta)$ holds, then $k_\xi$ can be chosen so that $k_\xi>\sup_{\zeta<\xi}k_\zeta$.
\end{enumerate}

We define the sequence by cases: Let $p_0$ be arbitary. If $\xi<\kappa$ is a limit then we let $p_\xi = \bigcap_{\zeta<\xi}p_j$. Now suppose that $\xi=\xi'+1$. Then for all $t \in \fsplit_{\xi'}(p_{\xi'})$, apply \autoref{basic-claim} to obtain some $q \le p_{\xi'} \rest t$ with $\stem q = t$ such that for some $k_\xi>\sup_{\zeta \le \xi'}k_\zeta$, $\varphi(q,k_\xi)$ holds. Finally, having defined the fusion sequence, we let $p=\bigcap_{\xi<\lambda}p_\xi$.

Now we argue that $p \Vdash \textup{``}\Gamma(\P) \in V[\dot{f}]\textup{''}$. Let $f=\dot{f}[G]$ for some $G$ that is $\P$-generic over $V$. We will argue that $G$ is definable from $f$. Specifically, we will define an $\le$-increasing sequence $\seq{i_\xi}{\xi<\kappa} \subseteq \kappa$ and a $\sqsubseteq$-increasing sequence $\seq{t_\xi}{\xi<\kappa} \subset p$ of splitting nodes such that for all $\xi<\kappa$:

\begin{enumerate}[(a)]
\item $t_\xi \in \fsplit_\xi(p_\xi)$,
\item there is some $q \in G$ such that $t_\xi \in q$.
\end{enumerate}
 Then it will be the case that $G = \{q \in \P:\bigcup_{\xi<\lambda}t_\xi \subseteq q\}$, i.e$.$ the $t_\xi$'s define the generic branch.

Let us construct the sequence. We can let $t_0= \emptyset$. If $\xi$ is a limit then we let $t_\xi = \bigcup_{\zeta<\xi}t_\zeta$, and we note that $t_\xi$ is a splitting node of the correct order by the definition of the poset. If $\xi = {\xi'}+1$ then we consider $t_{\xi'}$, which by assumption is an element of $\fsplit_{\xi'} p_{\xi'}$ such that $\varphi(p_{\xi'} \rest t_{\xi'},i)$ holds for some $i$. Choose $\tilde{t} \in \ssucc_{t_{\xi'}}p_{\xi'}$ determining the correct values for $\dot{f}[G]$ up to $i$.\end{proof}

\begin{proposition}\label{main-highpres} $\P$ does not add surjections from $\kappa$ to $\theta$ for any regular $\theta>\mu$.\end{proposition}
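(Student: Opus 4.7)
The plan is to show that for every $p \in \P$ and every $\P$-name $\dot{g}$ for a function $\kappa \to \theta$ there are a $q \leq p$ and a $\beta < \theta$ with $q \Vdash \textup{``range}(\dot{g}) \subseteq \beta\textup{''}$; this suffices since any surjection would have unbounded range.

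First I would build a fusion sequence $\seq{p_\xi}{\xi < \kappa}$ below $p$ designed to decide $\dot{g}(\xi)$ below each immediate successor of each $\xi$-splitting node. At a successor stage, given $p_\xi$, for each $t \in \fsplit_\xi(p_\xi)$ and each $\alpha \in \osucc_{p_\xi}(t)$ I use $\kappa$-closure to pick $q^{t, \alpha} \leq p_\xi \rest (t \conc \langle \alpha \rangle)$ and $\gamma^{t, \alpha}_\xi < \theta$ with $q^{t, \alpha} \Vdash \dot{g}(\xi) = \gamma^{t, \alpha}_\xi$. Then $p_{\xi+1}$ is obtained by retaining $p_\xi$ through its $\xi$-splitting level and grafting each $q^{t, \alpha}$ below $t \conc \langle \alpha \rangle$; this is still a condition because $\osucc_{p_{\xi+1}}(t) = \osucc_{p_\xi}(t) \in D$. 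At limit stages, take intersections, which remain in $\P$ by the same argument establishing $\kappa$-closure, using the $\lambda$-closure of $D$.

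Let $p^* = \bigcap_{\xi<\kappa}p_\xi$. By construction $\fsplit_\xi(p^*) = \fsplit_\xi(p_\xi)$, so for each $t \in \fsplit_\xi(p^*)$ the condition $p^* \rest t$ forces $\dot{g}(\xi)$ into the set $\{\gamma^{t, \alpha}_\xi : \alpha \in \osucc_{p_\xi}(t)\}$, which has cardinality at most $\mu$. Regularity of $\theta > \mu$ then gives $\delta^{t, \xi} := \sup_\alpha \gamma^{t, \alpha}_\xi + 1 < \theta$, and $p^* \rest t \Vdash \dot{g}(\xi) < \delta^{t, \xi}$.

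Finally, set $\delta_\xi := \sup\{\delta^{t, \xi} : t \in \fsplit_\xi(p^*)\}$ and $\beta := \sup_{\xi < \kappa}\delta_\xi$; the main obstacle is verifying that $\beta < \theta$. Since each splitting node has at most $\mu$ successors, $|\fsplit_\xi(p^*)| \leq \mu^{|\xi|+1}$. In the setting produced by \autoref{gettingLIP} one has $2^{\lambda} = \mu$ and hence $\mu^{<\kappa} = \mu$, giving $|\fsplit_\xi(p^*)| \leq \mu$; then $\delta_\xi < \theta$ by regularity, and $\beta < \theta$ because $\kappa \leq \mu < \theta$. Consequently $p^* \Vdash \textup{``range}(\dot{g}) \subseteq \beta\textup{''}$, as required.
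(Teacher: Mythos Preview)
Your proof is essentially the same as the paper's: build a fusion sequence deciding $\dot g(\xi)$ below each immediate successor of each $\xi$-level splitting node, take the fusion limit, and bound the range by the supremum of all decided values. You are in fact more careful than the paper about the last step, flagging that $\beta<\theta$ needs $\mu^{<\kappa}=\mu$; the paper simply asserts $\sup(B)<\theta$ without comment, tacitly relying on the same cardinal arithmetic (which does hold in the intended model coming from \autoref{gettingLIP}).
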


\begin{proof} Suppose that we have a $\P$-name $\dot{f}$ for a function such that (without loss of generality) the empty condition forces $\dot{f}:\kappa \to \theta$. We will define a fusion sequence $\seq{p_i}{i<\kappa}$ as follows: Let $p_0$ be arbitrary. If $i$ is a limit then let $p_i = \bigcap_{j<i}p_j$. If $i=k+1$, then for all $t \in \fsplit_{k}(p_{k})$ and $\alpha \in \osucc_{p_k}(\alpha)$, choose some $q_{t,\alpha} \le p_k \rest (t {}^\frown \langle \alpha \rangle)$ deciding $\dot{f}(k)$. Then let $p_i = \bigcup \{q_{t,\alpha}:t \in \fsplit_{k}(p_{k}), \alpha \in \osucc_{p_k}(t)\}$.

If we let
\[
B=\{\delta:\exists i<\lambda,t \in \fsplit_i(p_i),\alpha \in \osucc_{p_i}(t),q_{t,\alpha} \Vdash \textup{``}\dot{f}(i)=\delta\textup{''}\},
\]
then it follows that $p \Vdash \textup{``} \range(\dot{f}) \subseteq \sup(B)<\theta \textup{''}$.\end{proof}

Now we are in a position to answer the question of Bukovsk{\'y} and Copl{\'a}kov{\'a}-Hartov{\'a} that was mentioned in the introduction.

\begin{theorem}\label{question1} Assuming consistency of a measurable cardinal, there is a model $V$ such that there is an $|\aleph_2^V|=\aleph_1$-minimal extension $W \supset V$ that is not an $|\aleph_3^V|=\aleph_1$-extension.\end{theorem}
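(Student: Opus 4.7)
The plan is to combine the three main properties of $\Ptanf^\kappa(D)$ just established in this section. Starting from a model $V_0$ containing a measurable cardinal $\mu$, I would first force over $V_0$ with $\Col(\aleph_1^{V_0}, <\mu)$ to obtain $V$, in which $\mu = \aleph_2^V$ and, by \autoref{gettingLIP}, $\LIP(\aleph_2^V, \aleph_1^V)$ holds; fix a witnessing dense set $D$. Inside $V$, set $\kappa = \aleph_1^V$, $\mu = \aleph_2^V$, and $\P = \Ptanf^\kappa(D)$; let $G$ be $\P$-generic over $V$ and $W = V[G]$. The task is then to verify: (a) $W \models |\aleph_2^V| = \aleph_1$, (b) any intermediate extension $V \subseteq W' \subseteq W$ satisfying $|\aleph_2^V|^{W'} = \aleph_1$ equals $W$, and (c) $W \not\models |\aleph_3^V| = \aleph_1$.

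For (a), $\P$ is $\kappa$-closed, so $\aleph_1^V$ is preserved as a cardinal and no new countable sequences of ordinals are added, while by \autoref{main-singularization} one has $\cf^W(\aleph_2^V) = \aleph_1$. Thus $\aleph_2^V$ is the supremum in $W$ of an $\aleph_1$-sequence of ordinals of $V$-cardinality at most $\aleph_1^V$, giving $|\aleph_2^V|^W = \aleph_1$. For (c), $\aleph_3^V$ is regular in $V$ and strictly greater than $\mu$, so \autoref{main-highpres} rules out a surjection $\aleph_1 \twoheadrightarrow \aleph_3^V$ in $W$, whence $|\aleph_3^V|^W > \aleph_1$.

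The crux is (b). Given $V \subseteq W' \subseteq W$ with $|\aleph_2^V|^{W'} = \aleph_1$, one has $\cf^{W'}(\aleph_2^V) \le \aleph_1$. Since $\P$ adds no $\omega$-sequences of ordinals, $\cf^{W'}(\aleph_2^V) \neq \omega$, so it equals $\aleph_1$. Then \autoref{main-minimality}, applied with the present $\kappa$ and $\mu$, yields $W' = V[G] = W$. The main delicate point is this last step: ruling out $\cf^{W'}(\aleph_2^V) = \omega$ is exactly what permits the invocation of the cofinality-minimality in \autoref{main-minimality}, and it relies on the $\aleph_1$-closure of the dense set $D$ supplied by $\LIP(\aleph_2^V,\aleph_1^V)$ -- that is, on the measurable-cardinal assumption.
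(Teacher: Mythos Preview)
Your proposal is correct and follows essentially the same route as the paper: obtain $\LIP(\aleph_2,\aleph_1)$ from a measurable via \autoref{gettingLIP}, force with $\Ptanf^{\aleph_1}(D)$, and read off the three desired properties from \autoref{main-singularization}, \autoref{main-minimality}, and \autoref{main-highpres}. In fact you supply a detail the paper leaves implicit, namely the reduction of $|\aleph_2^V|=\aleph_1$-minimality to $(\cf(\aleph_2^V)=\aleph_1)$-minimality by using $\aleph_1$-closure of $\P$ to rule out $\cf^{W'}(\aleph_2^V)=\omega$.
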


\begin{proof} Suppose that $\LIP(\aleph_2,\aleph_1)$ holds and is witnessed by the dense set $D_2$ in $V$. Then let $W$ be an extension by $\Ptanf^{\aleph_1}(D_2)$. Then $W$ is an $|\aleph_2^V|=\aleph_1$-extension by \autoref{main-singularization} and it is a minimal such extension by \autoref{main-minimality}. If $\Ptanf^{\aleph_1}(D_2)$ collapses $\aleph_3^V$, then it would collapse it to an ordinal of cardinality $\aleph_1^V$, but it follows from \autoref{main-highpres} that this is not possible.\end{proof}

\subsection{Remaining Questions}

As stated in the introduction, it would be clarifying to know for sure whether there is an exact equiconsistency.

\begin{question} Does the conclusion of \autoref{question1} require consistency of a measurable cardinal?\end{question}

There is also the question of the extent to which \autoref{question1} can be stratified. 

\begin{question} Assuming $\LIP(\lambda,\mu)$, is it consistent that $\omega<\kappa < \lambda < \mu$ are regular cardinals and $\Ptanf^\kappa$ preserves cardinals $\nu \le \lambda$?\end{question}

This question appears to rely heavily on the determinacy of the generalizations of Namba-style games to uncountable length $\kappa$ (see e.g$.$ \cite[Chapter XI]{PIF} \cite{Foreman-Todorcevic2005} , \cite[Fact 5]{Cummings-Magidor2011}). One could pose this question in terms of $(\kappa,\nu)$-distributivity, but even some tricks that allow one to merely obtain cardinal preservation from similar posets (see \cite[Theorem 3]{Levine2023b}) seem to depend on these types of games (see \cite[Fact 1]{Cummings-Magidor2011}).

\bibliographystyle{plain}
\bibliography{bibliography}

\end{document}